\begin{document}
\title{Group-theoretical property of non-degenerate fusion categories of FP-dimensions $p^2q^3$ and $p^3q^3$}
\author{Zhiqiang Yu}
\date{}
\maketitle

\newtheorem{theo}{Theorem}[section]
\newtheorem{prop}[theo]{Proposition}
\newtheorem{lemm}[theo]{Lemma}
\newtheorem{coro}[theo]{Corollary}
\theoremstyle{definition}
\newtheorem{defi}[theo]{Definition}
\newtheorem{exam}[theo]{Example}
\newtheorem{rema}[theo]{Remark}
\newtheorem{conj}[theo]{Conjecture}

\newcommand{\A }{\mathcal{A}}
\newcommand{\C }{\mathcal{C}}
\newcommand{\B }{\mathcal{B}}
\newcommand{\D }{\mathcal{D}}
\newcommand{\E }{\mathcal{E}}
\newcommand{\K}{\mathds{k}}
\newcommand{\I }{\mathcal{I}}
\newcommand{\M }{\mathcal{M}}
\newcommand{\Q }{\mathcal{O}}
\newcommand{\Y }{\mathcal{Z}}
\newcommand{\Z }{\mathbb{Z}}

\abstract
In this paper, we show that  non-degenerate fusion categories of FP-dimensions $p^2q^3d$ and $p^3q^3d$ are group-theoretical, where $p, q$ are odd primes, $d$ is a square-free integer such that $(pq,d)=1$.

\bigskip
\noindent {\bf Keywords:} Group-theoretical fusion category; non-degenerate fusion category;

Mathematics Subject Classification 2010: 18D10 $\cdot$ 16T05
\section{Introduction}
Throughout this paper, we always assume $\K$ is an algebraically closed field of characteristic zero, $\K^{*}:=\K\backslash {\{0}\}$, $\mathbb{Z}_r:=\mathbb{Z}/r$, $r\in \mathbb{N}$.  For any semisimple $\K$-linear abelian  category $\C$, we use $\Q(\C)$ to denote set of isomorphism classes of simple objects of $\C$,  cardinal of $\Q(\C)$ is denoted by $rank(\C)$.

Fusion category $\C$ is group-theoretical if it is Morita equivalent to a pointed fusion category $Vec_G^\omega$, where $G$ is a finite group, $\omega\in Z^3(G,\K^*)$ is a $3$-cocycle. Equivalently, there exists braided equivalence between Drinfeld centers $\Y(\C)\cong\Y(Vec_G^\omega)$ \cite[Theorem 1.3]{ENO3}. Since braided fusion categories $\C$ are subcategories of $\Y(\C)$, braided group-theoretical fusion categories are studied explicitly in \cite{NNW}.

 A braided fusion category $\C$ is non-degenerate, if M\"{u}ger center $\C'=Vec$.
It was proved in \cite[Theorem 4.6]{BGHKNNPR} that non-degenerate fusion categories of FP-dimension $p^2q^2$ are group-theoretical, where $p,q$ are odd primes.
In this paper, for  odd primes $p$  and $q$, we continue to   classify non-degenerate fusion categories of FP-dimension $p^2q^3$ and $p^3q^3$. We show that they are also group-theoretical fusion categories, see Theorem \ref{maintheorem2} and Theorem \ref{maintheorem3}. Consequently, combining with result of \cite[Theorem 3.2.2]{OY}, we can extend group-theoretical property to non-degenerate fusion categories of FP-dimension $p^2q^3d$ and $p^3q^3d$, where $d$ is a square-free integer such that $(pq,d)=1$.

The organization of this paper is  as follows. In section \ref{section2}, we recall some basic properties of fusion categories and braided fusion categories that we use throughout. In section \ref{section3}, we prove our two main theorems: Theorem \ref{maintheorem2} and Theorem \ref{maintheorem3}.

\section{Preliminaries}\label{section2}
In this paper, we will freely use the basic theory of fusion categories  but will recall some most used facts below, we refer the reader to \cite{DrGNO1,DrGNO2, ENO1,ENO2,ENO3,GN} for properties of fusion categories and braided fusion categories.

Let $G$ be a finite group,   $\C$ is a $G$-graded fusion category $\C=\oplus_{g\in G}\C_g$, if $\C_g\otimes\C_h\subseteq\C_{gh}$ and $(\C_g)^*=\C_{g^{-1}}$. Note that $\C_e$ is a fusion subcategory of $\C$, so we also say that $\C$ is a $G$-extension of $\C_e$. Let  $\C_{ad}$ be the  adjoint fusion subcategory of $\C$, that is, $\C_{ad}$  is  generated by simple objects $Y$ such that $Y\subseteq X\otimes X^*$ for some simple object $X$, then $\C$ has a faithful grading with $\C_{ad}$ be the trivial component \cite[Corollary 3.7]{GN}, moreover for any  grading of $\C=\oplus_{g\in G}\C_g$, we have $\C_{ad}\subseteq\C_{e}$.

For any fusion category $\C$, there exists a unique ring homomorphism FPdim(-) from Grothendieck ring $Gr(\C)$ to $\K$, which satisfies $FPdim(X)\geq1$ for all $X\in\Q(\C)$ \cite[Theorem 8.6]{ENO1}, and $FPdim(\C):=\sum_{X\in\Q(\C)}FPdim(X)^2$. Then a fusion category is pointed, if all simple objects are invertible, equivalently simple objects have FP-dimension $1$. Moreover, in this case,  $\C$ is equivalent to the category of $G$-graded vector spaces $Vec_G^\omega$, where  $\Q(\C)\cong G$ is a finite group, $\omega\in Z^3(G,\K^*)$ is a $3$-cocylce. For a fusion category $\C$, let $\C_{pt}$ be the maximal pointed fusion subcategory of $\C$, $G(\C)$ be the group of isomorphism classes of invertible objects of $\C$.

We say that a fusion category $\C$ is nilpotent, if there exists  a natural number $n$ such that $\C^{(n)}=Vec$, where $Vec$ is the semisimple category of finite-dimensional vector space over $\K$, $\C^{(0)}:=\C$, $\C^{(1)}:=\C_{ad}$, $\C^{(m)}:=(\C^{(m-1)})_{ad}$, for all $m\geq1$ \cite{GN}. And fusion category $\C$ is said to be cyclically nilpotent \cite{ENO3}, if there is a sequence of fusion categories $ Vec=\C_0\subseteq\C_1\subseteq\cdots\subseteq\C_n=\C$ and cyclic groups $G_i$ such that $\C_i$ is obtained by $G_i$-extension of $\C_{i-1}$, $1\leq i\leq n$. Obviously, pointed fusion categories are nilpotent, and fusion categories of prime power FP-dimensions are always cyclically nilpotent by \cite [Theorem 8.28]{ENO1}.  Fusion category $\C$ is group-theoretical if it is Morita equivalent to a pointed  fusion category \cite{ENO1}. Meanwhile, fusion category $\C$ is solvable if it is Morita equivalent to a cyclically nilpotent fusion category $\D$ \cite{ENO3}.

Fusion category $\C$ is a braided  fusion category, if for  arbitrary objects $X,Y,Z\in\C$, there exists a natural isomorphism $c_{X,Y}:X\otimes Y\to Y\otimes X$, which satisfies $c_{X,I}=c_{I,X}=id_X$, $$c_{X\otimes Y,Z}=c_{X,Z}\otimes id_Y \circ id_X\otimes c_{Y,Z}, \quad c_{Z,X\otimes Y}=id_X\otimes c_{Z,Y}\circ c_{Z,X}\otimes id_Y,$$
here we suppress the associativity isomorphism. Let $\D\subseteq\C$ be a braided fusion subcategory, the   centralizer  $\D'$ of $\D$ is the fusion subcategory generated by all simple objects $X$ of $\C$ such that $c_{Y,X} c_{X,Y}=id_{X\otimes Y}$, $\forall Y\in\D$. Braided fusion category $\C$ is symmetric, if $\C'=\C$; $\C$ is non-degenerate if and only if $\C'=Vec$ by \cite[Proposition 3.7]{DrGNO2}.

Symmetric fusion category $\E$ is Tannakian, if $\E\cong Rep(G)$, where $G$ is a finite group, braiding of $Rep(G)$ is given by standard reflection
$$c_{X,Y}(x\otimes y)=y\otimes x,~ x\in X,~  y\in Y,~ X, Y\in Rep(G).$$
Particularly, symmetric fusion categories of odd FP-dimensional are Tannakian categories \cite[Corollary 2.50]{DrGNO2}. Let $\E=Rep(G)$ be a  Tannakian subcategory of braided fusion category $\C$. Then the de-equivariantization $\C_G$ is a $G$-crossed braided fusion category,  and we have a fusion category equivalence $\C\cong(\C_G)^G$, see \cite[section 4]{DrGNO2} for details. In addition, $FPdim(\C_G)=\frac{FPdim(\C)}{|G|}$ and $FPdim(\C^G)= FPdim(\C)|G|$ \cite[Proposition 4.26]{DrGNO2}. If $Rep(G)=\E\subseteq\C'$, then $\C_G$ is  a braided fusion category; in particular, if $\C$ is non-degenerate, then $\C_G^0\cong\E_G'$ is also a non-degenerate fusion category and $G$-grading of $\C_G$ is faithful \cite[Proposition 4.56]{DrGNO2}. Hence, fusion category $\C$ is  group-theoretical  if and only if Drinfeld center $\Y(\C)$ contains a Lagrangian subcategory \cite[Therem 4.64]{DrGNO2}, i.e. a Tannakian subcategory $\E\subseteq\Y(\C)$ such that $\E'=\E$.

It is well-known that there is a bijective correspondence between pointed non-degenerate fusion categories and metric groups, see \cite[subsection 2.11]{DrGNO2} for details. We use $\C(G,\eta)$ to denote the non-degenerate pointed fusion category determined by metric group $(G,\eta)$, where $\eta:G\to\K^*$ is a non-degenerate quadratic form on $G$.

\section{Main result}\label{section3}
In this section,  for odd primes $p,q$, we prove that non-degenerate fusion category of FP-dimensions $p^2q^3$ and $p^3q^3$ are group-theoretical.

It was shown in \cite[Theorem 1.6]{ENO3} that fusion categories $\C$ of FP-dimensions $p^aq^b$ are solvable, where $p,q$ are primes, $a,b$ are non-negative integers. Particularly, $\C_{pt}$ is non-trivial by  \cite[Proposition 4.5]{ENO3}. Moreover, if $\D$ is a non-degenerate fusion category, then \cite[Theorem 3.14]{DrGNO2} and \cite[Corollary 6.8]{GN} imply that
$$FPdim(\D_{ad})FPdim(\D_{pt})=FPdim(\D), ~(\D_{ad})'=\D_{pt},  ~\D_{ad} =(\D_{pt})'.$$

For a  braided  fusion category $\C$, let $Aut^\text{br}_\otimes(\C)$ be the set of braided equivalences of $\C$. If $F\in Aut^\text{br}_\otimes(\C)$,   $\Q(\C)^F\subseteq\Q(\C)$ denotes the subset of simple objects of $\C$ that are fixed by $F$. The following theorem strengthens \cite[Theorem 4.6]{BGHKNNPR}.
\begin{theo}\label{maintheorem1}Let $\C$ be a non-degenerate fusion category of FP-dimensions $p^2q^2$, where $q<p$ are odd primes. Then $\C$ is pointed or $\C$ is braided equivalent to   Drinfeld center of a pointed fusion category.
\end{theo}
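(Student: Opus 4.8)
The plan is to reduce the statement to the existence of a Lagrangian subcategory of $\C$ and then to argue according to whether the maximal pointed subcategory $\C_{pt}$ is non-degenerate. For the reduction, suppose $\C$ contains a Tannakian Lagrangian subcategory $\E\cong Rep(G)$, so that $\E'=\E$ and $FPdim(\E)^2=FPdim(\C)=p^2q^2$ gives $|G|=pq$. Since $\C$ is non-degenerate the de-equivariantization $\C_G$ is $G$-crossed braided with trivial component braided equivalent to $(\E')_G=\E_G=Vec$, while $FPdim(\C_G)=FPdim(\C)/|G|=pq=|G|$. As all non-zero homogeneous components of a graded fusion category share the same Frobenius--Perron dimension, this forces the $G$-grading to be faithful with every component of dimension $1$; hence $\C_G$ is pointed and, by the theory of Lagrangian subcategories \cite{DrGNO2}, $\C$ is braided equivalent to $\Y(\C_G)$, the Drinfeld center of a pointed fusion category. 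Thus it is enough to show that a non-pointed $\C$ contains a Lagrangian subcategory. I will also use the base fact that a non-degenerate fusion category of dimension $\ell^2$ with $\ell$ an odd prime is pointed: otherwise $FPdim(\C_{pt})=FPdim(\C_{ad})=\ell$, and since a fusion category of prime dimension is pointed we get $\C_{ad}\subseteq\C_{pt}$, hence $\C_{ad}=\C_{pt}$; as $\C_{ad}=(\C_{pt})'$ always holds, $\C_{pt}=(\C_{pt})'$ is a symmetric, odd-dimensional, hence Tannakian, Lagrangian subcategory, and the reduction identifies $\C$ with a (twisted) double of $\Z_\ell$, which is pointed -- a contradiction.

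Case I: $\C_{pt}$ is non-degenerate. Then $\C\cong\C_{pt}\boxtimes(\C_{pt})'=\C_{pt}\boxtimes\C_{ad}$ by M\"{u}ger's factorization, so $G(\C)=G(\C_{pt})\times G(\C_{ad})$. Since $\C_{pt}$ already carries every invertible object of $\C$, the factor $G(\C_{ad})$ is trivial; being solvable with no non-trivial invertible object, $\C_{ad}=Vec$, so $\C=\C_{pt}$ is pointed. Consequently a non-pointed $\C$ falls into Case II, where $\C_{pt}$ is degenerate and its M\"{u}ger center $\E_0=\C_{pt}\cap\C_{ad}$ is a non-trivial symmetric subcategory; being odd-dimensional it is Tannakian, equal to some $Rep(G_0)$, and it is contained in $\C_{ad}$.

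In Case II I would run the de-equivariantization machine. Picking a prime $\ell\in\{p,q\}$ dividing $|G_0|$ and a Tannakian subcategory $Rep(\Z_\ell)\subseteq\E_0$, the category $\D:=\C_{\Z_\ell}^0$ is non-degenerate of dimension $p^2q^2/\ell^2\in\{p^2,q^2\}$, hence pointed by the base fact. A non-trivial Tannakian (equivalently, isotropic) subcategory of $\D$ lifts to a Tannakian subcategory of $\C$ strictly containing $Rep(\Z_\ell)$, so by iterating one reaches a Lagrangian subcategory of $\C$ unless some intermediate $\D$ is anisotropic, in which case the maximal Tannakian subcategory of $\C$ has dimension only $p$ or $q$.

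Excluding this last possibility is the crux, and I expect it to be the main obstacle. The idea is to pin down $m:=FPdim(\C_{pt})$ and the metric group $(G(\C),\eta)$ from the centralizer relations $\C_{ad}=(\C_{pt})'$, $FPdim(\C_{ad})FPdim(\C_{pt})=p^2q^2$ and $\E_0=\C_{pt}\cap\C_{ad}$, and then to confront this with the dimension identity $p^2q^2=m+\sum_X FPdim(X)^2$ summed over the non-invertible simple objects $X$, whose dimensions are products of powers of $p$ and $q$ by solvability. The divisibility constraints forced by this identity should be incompatible with an anisotropic prime-square quotient, which would leave $G(\C)$ too large relative to the few available non-invertible objects; hence the maximal Tannakian subcategory must have dimension $pq$ and be Lagrangian. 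Organizing the finitely many possibilities for $(G(\C),\eta)$ together with these centralizer relations so that the counting eliminates every anisotropic case is exactly where the distinctness of the odd primes $p$ and $q$ is used.
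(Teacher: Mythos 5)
Your reduction (a Tannakian Lagrangian subcategory forces $\C\cong\Y(Vec_G^\omega)$), your base fact for dimension $\ell^2$, and your Case I (if $\C_{pt}$ is non-degenerate then $\C$ is pointed, via M\"{u}ger factorization and the fact that a non-trivial solvable category has non-trivial invertibles) are all sound. The genuine gap is exactly the step you flag as "the crux": excluding, for non-pointed $\C$, the possibility that the maximal Tannakian subcategory has dimension only $p$ or $q$ (the anisotropic case). Your plan for that step --- pinning down $(G(\C),\eta)$ and the dimension identity and hoping the divisibility constraints are incompatible --- cannot succeed as stated, because there is a configuration that passes every such test. Take $q\mid p+1$, let $\Z_q$ act freely on the non-identity elements of the \emph{anisotropic} metric group $(H,\eta)$ of order $p^2$ (the norm form of the field with $p^2$ elements admits such isometries), and consider the would-be equivariantization of a $\Z_q$-crossed extension of $\C(H,\eta)$: the resulting numerics are $q$ invertibles, $q(q-1)$ simples of FP-dimension $p$, and $(p^2-1)/q$ simples of FP-dimension $q$, which satisfy $q+p^2q(q-1)+q(p^2-1)=p^2q^2$, give $FPdim(\C_{pt})=q$ and $FPdim(\C_{ad})=p^2q$ consistently with $(\C_{pt})'=\C_{ad}$, and obey Frobenius divisibility. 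So no counting of dimensions and invertibles can rule this case out; it must be excluded structurally. (A smaller inaccuracy: a Tannakian subcategory of $\D=\C_{\Z_\ell}^0$ lifts to $\C$ only if it is stable under the $\Z_\ell$-action, a point your iteration glosses over, though stability is cheap here.)

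The paper closes precisely this gap with an input your proposal never uses: by \cite[Theorem 4.6]{BGHKNNPR}, $\C$ is \emph{already known} to be group-theoretical, and the proof of \cite[Theorem 4.2]{BGHKNNPR} pins down $FPdim(\C_{ad})=p^2q$ and $FPdim(\C_{pt})=q$ in the non-pointed case. Group-theoreticity activates \cite[Theorem 7.3]{NNW}: there is a Tannakian subcategory $Rep(N)\subseteq\C$ with $\C_N$ pointed. If the maximal Tannakian dimension were $q$, then $|N|=q$ and $\C_N^0=\C(H,\eta)$ has dimension $p^2$; since $\C_N$ is pointed and faithfully graded, every component has rank $p^2$, and the rank formula \cite[Lemma 10.7]{Kir} forces $N$ to fix all simples of $\C(H,\eta)$, whence $\C_{ad}\cong\C(H,\eta)^N\cong\C(H,\eta)\boxtimes Rep(N)$ is pointed of dimension $p^2q$, contradicting $FPdim(\C_{pt})=q$. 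Hence the maximal Tannakian subcategory has dimension $pq$ and is Lagrangian. That borrowed group-theoreticity (or an equivalent classification input) is the missing idea; without it your Case II cannot be completed.
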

\begin{proof} By \cite[Theorem 4.6]{BGHKNNPR}, $\C$ is a group-theoretical fusion category. Assume $\C$ is not pointed,  then  the proof of \cite[Theorem 4.2]{BGHKNNPR}  implies that  $FPdim(\C_{ad})=p^2q$ and $FPdim(\C_{pt})=q$. Moreover $\C_{pt}\subseteq\C_{ad}$ is a Tannakian subcategory by \cite[Corollary 6.8]{GN}.

Let $\E=Rep(G)$ be a maximal Tannakian subcategory of $\C$, since $|G|^2|FPdim(\C)$ \cite[Theorem 3.14]{DrGNO2}, $|G|\in{\{q,p,pq}\}$. We show $|G|=pq$. Indeed, $\C_{pt}$ is   Tannakian, so $\C_{pt}$ is contained in a maximal Tannakian subcategory,  while \cite[Theorem 5.9]{DrGNO2} shows that maximal Tannakian subcategories of $\C$ have same FP-dimension, so $|G|\neq p$.  If $|G|=q$, so $\C_{pt}$ is a maximal  Tannakian subcategory.  Note that $\C$ is group-theoretical but not pointed, it follows from \cite[Theorem 7.3]{NNW} that there exists a Tannakian subcategory $Rep(N)$ such that $\C_N$ is pointed. Let $\D:=\C_N=\oplus_{g\in N}\D_g$ with $\C_N^0=\D_e=\C(H,\eta)$ be a non-degenerate pointed fusion category of FP-dimension $p^2$. Therefore, $\forall g\in N$, $p^2=rank(\D_g)=|\Q(\D_e)^g|$ by \cite[Lemma 10.7]{Kir},  which means that $N$ acts trivially on $\C(H,\eta)$. However, we have $(q,|H|)=1$, then $\E'\cong\C(H,\eta)^N\cong\C(H,\eta)\boxtimes Rep(N)$ is a pointed fusion category, this contradicts to $FPdim(\C_{pt})=q$.

Hence $FPdim(\E)=pq$, $\E$ is a Lagrangian subcategory of $\C$ by definition.  It follows from \cite[Theorem 4.64]{DrGNO2} that $\C\cong \Y(Vec_G^\omega)$,  where   $\omega\in Z^3(G,\K^*)$ is a $3$-cocycle.
\end{proof}
Next we classify non-degenerate fusion categories of FP-dimension $p^2q^3$.
\begin{lemm}\label{lemma}Braided fusion category $\A$ of FP-dimension $pq$ are symmetric or pointed, where $p<q$ are odd primes.
\end{lemm}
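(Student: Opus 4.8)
The plan is to analyze $\A$ through its M\"uger center $\A'$. Since $FPdim(\A)=pq$ is odd, $\A'$ is a symmetric fusion category of odd FP-dimension, hence Tannakian, $\A'\cong Rep(L)$ by \cite[Corollary 2.50]{DrGNO2}, and $FPdim(\A')=|L|$ divides $pq$, so that $|L|\in\{1,p,q,pq\}$. If $|L|=pq$ then $\A'=\A$ and $\A$ is symmetric, so I would dispose of that case immediately and show that in each of the three remaining cases $\A$ is pointed. Throughout I will use that $\A$ has FP-dimension $p^1q^1$, hence is solvable with $\A_{pt}\neq Vec$ by \cite[Theorem 1.6, Proposition 4.5]{ENO3}, together with the standard fact that any fusion category of prime FP-dimension is pointed \cite{ENO1}.

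Suppose first $|L|=1$, so $\A$ is non-degenerate. Then the relations recalled before Theorem \ref{maintheorem1} give $FPdim(\A_{ad})FPdim(\A_{pt})=pq$ with $\A_{ad}=(\A_{pt})'$. Since $\A_{pt}\neq Vec$ and $FPdim(\A_{pt})$ divides $pq$, we have $FPdim(\A_{pt})\in\{p,q,pq\}$. If $FPdim(\A_{pt})=p$, then $FPdim(\A_{ad})=q$ is prime, so $\A_{ad}$ is pointed and hence $\A_{ad}\subseteq\A_{pt}$, forcing $q=FPdim(\A_{ad})$ to divide $FPdim(\A_{pt})=p$, which is absurd; the case $FPdim(\A_{pt})=q$ is excluded in exactly the same way. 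Thus $FPdim(\A_{pt})=pq$ and $\A$ is pointed.

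It remains to treat $|L|\in\{p,q\}$; I describe the case $|L|=p$, the other being symmetric in the roles of $p$ and $q$. Here $\E:=\A'\cong Rep(\Z_p)$ is the \emph{entire} M\"uger center of $\A$ and is Tannakian, so by the modularization/de-equivariantization results of \cite{DrGNO2} the category $\B:=\A_{\Z_p}$ is a non-degenerate braided fusion category with $FPdim(\B)=pq/p=q$. Being of prime FP-dimension, $\B$ is pointed, and non-degeneracy forces $\B\cong\C(\Z_q,\eta_0)$ for a non-degenerate quadratic form $\eta_0$. Now $\A\cong\B^{\Z_p}$ is recovered as an equivariantization, where $\Z_p$ acts on $\B$ by braided autoequivalences; the induced permutation of $\Q(\B)\cong\Z_q$ factors through $Aut^{\text{br}}_\otimes(\C(\Z_q,\eta_0))$, which is the orthogonal group $O(\Z_q,\eta_0)\cong\Z_2$. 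Since $p$ is odd, this action is trivial, so $\Z_p$ fixes every (invertible) simple object of $\B$; consequently every simple object of $\B^{\Z_p}$ has FP-dimension $1$, i.e.\ $\A$ is pointed. I expect the delicate point to be precisely this intermediate case: one must know that de-equivariantizing by the full Tannakian M\"uger center yields a non-degenerate category, so that $\eta_0$ is non-degenerate and $Aut^{\text{br}}_\otimes(\B)$ is only $\Z_2$; were $\eta_0$ degenerate, the relevant automorphism group would be all of $Aut(\Z_q)\cong\Z_{q-1}$ and could admit a non-trivial $\Z_p$-action, producing simple objects of FP-dimension $p$ and breaking the argument. Controlling the non-degeneracy of $\B$ and the resulting $\Z_p$-action is therefore the main work.
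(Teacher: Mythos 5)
Your proof is correct and follows essentially the same route as the paper: casework on the M\"uger center $\A'$, de-equivariantization (modularization) by the prime-order Tannakian center in the intermediate cases, and the observation that $Aut^{\text{br}}_\otimes(\C(\Z_q,\eta_0))\cong O(\Z_q,\eta_0)\cong\Z_2$ forces the odd-order cyclic group to act trivially, so the equivariantization remains pointed --- this is exactly the (implicit) content of the paper's step from $\C(\Z_p,\eta_1)^{\Z_q}$ to $\C(\Z_p,\eta_1)\boxtimes Rep(\Z_q)$. The only deviation is minor: in the non-degenerate case $\A'=Vec$ you give a self-contained argument using $FPdim(\A_{ad})FPdim(\A_{pt})=pq$, $\A_{ad}=(\A_{pt})'$ and $\A_{pt}\neq Vec$, whereas the paper just cites \cite[Theorem 2.11]{ENO3} (non-degenerate categories of odd square-free FP-dimension are pointed); both are valid.
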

\begin{proof}Assume that $\A$ is not symmetric, then we consider M\"{u}ger center $\A'$. If $\A'=Vec$, then $\A$ is a  non-degenerate fusion category, as $FPdim(\C)$ is an odd square-free integer, $\C$ must be a pointed fusion category by \cite[Theorem 2.11]{ENO3}. If $\A'\neq Vec$, then $\A'$ is a  Tannakian fusion category of prime FP-dimension, since $\A$ is not symmetric. Therefore, $\A\cong\C(\Z_p,\eta_1)^{\Z_q}$ or $\A\cong\C(\Z_q,\eta_2)^{\Z_p}$ as braided fusion categories by \cite[Corollary 4.31]{DrGNO2}. As  $p\neq q$ are   odd primes, we obtain that there is an  equivalence $\A\cong\C(\Z_p,\eta_1)\boxtimes Rep(\Z_q)$ or $\A\cong\C(\Z_q,\eta_2)\boxtimes Rep(\Z_p)$, hence $\A$ is pointed.
\end{proof}
For a braided fusion category $\A$, let $\B,\D$ be its fusion subcategories, we use $\B\vee\D$ to denote the fusion subcategory generated by $\B$ and $\D$. Let $\C$ be a non-degenerate fusion category of FP-dimensions $p^2q^3$, then   \cite[Proposition 8.15]{ENO1} says that $$FPdim(\C_{ad})\in{\{1,p,q,p^2,q^2,pq,p^2q,pq^2,q^3,p^2q^2,pq^3,p^2q^3}\}.$$
\begin{theo}\label{maintheorem2}Let  p and q be odd primes, then non-degenerate fusion categories of FP-dimension $p^2q^3$ are group-theoretical.
\end{theo}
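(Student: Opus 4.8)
The plan is to run an induction on $FPdim(\C)$ that, at each non-pointed stage, strips off a Tannakian subcategory by de-equivariantization. If $\C$ is pointed it is group-theoretical, so assume it is not. Since $FPdim(\C)=p^2q^3$ has the form $p^aq^b$, the category is solvable and $\C_{pt}\neq Vec$; together with $FPdim(\C_{ad})FPdim(\C_{pt})=p^2q^3$ this already gives $Vec\subsetneq\C_{pt}\subsetneq\C$. First I would show that $\C_{pt}$ is \emph{degenerate}: if it were non-degenerate, M\"{u}ger's decomposition would give $\C\cong\C_{pt}\boxtimes(\C_{pt})'=\C_{pt}\boxtimes\C_{ad}$, and then $\C_{ad}\cap\C_{pt}=Vec$ would leave $\C_{ad}$ with no nontrivial invertibles, forcing $\C_{ad}=Vec$ by solvability and hence $\C$ pointed, a contradiction. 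Consequently the M\"{u}ger center of $\C_{pt}$, namely $\E_0:=\C_{pt}\cap\C_{ad}$, is a nontrivial symmetric pointed category of odd FP-dimension, hence a nontrivial Tannakian subcategory $Rep(G_0)\subseteq\C$.

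Next I would enlarge $\E_0$ to a maximal Tannakian subcategory $\E=Rep(G)$. From $\E\subseteq\E'$ and $FPdim(\E)FPdim(\E')=FPdim(\C)$ we get $FPdim(\E)^2\mid p^2q^3$, so $|G|\in\{p,q,pq\}$, and de-equivariantization produces a faithfully $G$-graded $G$-crossed braided category $\C_G$ whose trivial component $\C_G^0\cong(\E')_G$ is non-degenerate of FP-dimension $p^2q^3/|G|^2\in\{q^3,\,p^2q,\,q\}$. Each of these is handled by the same induction: dimension $q$ is prime and pointed; a non-degenerate category of dimension $p^2q$ or $q^3$ that is not already pointed again has a nontrivial Tannakian subcategory by the argument of the previous paragraph, and de-equivariantizing once more lands on a non-degenerate category of prime or prime-square FP-dimension, which is pointed. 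Along the way the classification of dimension-$pq$ braided categories in Lemma \ref{lemma} and the dimension-$p^2q^2$ analysis in Theorem \ref{maintheorem1} are exactly what control the fusion subcategories and intermediate centralizers that arise.

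Finally I would lift group-theoreticality back along $\C\cong(\C_G)^G$. The most robust route is the center criterion \cite[Theorem 4.64]{DrGNO2}: one assembles, from the tower of Tannakian subcategories and the groups $G$, a Lagrangian subcategory $\E\subseteq\Y(\C)$ with $\E'=\E$; equivalently, one verifies that a $G$-crossed braided category whose de-equivariantization is pointed has a group-theoretical equivariantization. I expect the lifting step to be the main obstacle. Group-theoreticality is \emph{not} stable under arbitrary $G$-graded extensions, so knowing only that $\C_G^0$ is pointed is not enough: one must show that the \emph{whole} crossed category $\C_G$ is pointed (or at least group-theoretical), i.e. that every nontrivial graded component, not merely the identity component, consists of invertibles. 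This is precisely where non-degeneracy must be used in an essential way --- through the faithfulness of the $G$-grading and the identification $\C_G^0\cong(\E')_G$ --- and where the explicit low-dimensional inputs of Lemma \ref{lemma} and Theorem \ref{maintheorem1} are needed to rule out a non-invertible simple object surviving in a component of FP-dimension $pq$ or $p^2q^2$.
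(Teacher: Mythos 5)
Your overall strategy --- produce a Tannakian subcategory, de-equivariantize, and invoke the criterion that $\C$ is group-theoretical if and only if it contains a Tannakian subcategory $Rep(N)$ with $\C_N$ pointed \cite[Theorem 7.3]{NNW} --- is the same as the paper's, and your preliminary steps are sound: $\C_{pt}$ must be degenerate, its M\"uger center $\C_{pt}\cap\C_{ad}$ is a nontrivial Tannakian subcategory, a maximal Tannakian $\E=Rep(G)$ satisfies $|G|\in\{p,q,pq\}$, and $\C_G^0\cong(\E')_G$ is non-degenerate of FP-dimension $q^3$, $p^2q$ or $q$ (all of which are indeed pointed). But the step you yourself flag as ``the main obstacle'' is not a step you can defer: it is the actual content of the theorem, and your plan contains no argument for it. Knowing that $\C_G^0$ is pointed only gives that $\C_G$ is nilpotent with simple objects of FP-dimension $1$ or $p$ (resp.\ $1$ or $q$); when the component dimension is not square-free, non-invertible simples can genuinely survive. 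Indeed this happens inside this very proof: in the paper's case $FPdim(\C_{ad})=p^2q$, de-equivariantizing by the dimension-$q$ Tannakian $(\C_{ad})_{pt}$ can yield a crossed category whose nontrivial components each consist of $q$ simple objects of FP-dimension $p$ --- by \cite[Lemma 10.7]{Kir} this is exactly the situation where $G$ acts on $\Q(\C_G^0)$ with only $q$ fixed points. So the statement you hope to verify, that the \emph{whole} crossed category $\C_G$ is pointed, is simply false for that choice of $G$, and no re-use of Lemma \ref{lemma} or Theorem \ref{maintheorem1} on graded components will change that.

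What the paper does at this juncture is different in kind, not merely in detail: it either (i) proves $\C_G$ is pointed by a fixed-point argument --- in the case $FPdim(\C_{ad})=pq^3$, a non-pointed component would give, via \cite[Lemma 10.7]{Kir} and equivariantization of the fixed pointed subcategory \cite[Theorem 9.5]{ENO2}, a pointed subcategory $\B^{\Z_p}\subseteq\C$ of FP-dimension $pq$, contradicting $FPdim(\C_{pt})=p$ --- or (ii), when $\C_G$ is genuinely not pointed, it manufactures a \emph{larger} Tannakian subcategory: it gauges the fixed-point-free action of $G=\Z_q$ on $\C(H,\eta_1)$ to build an auxiliary non-degenerate category $\A$ of FP-dimension $p^2q^2$ containing $\C(H,\eta_1)^G\cong\C_{ad}$, applies Theorem \ref{maintheorem1} to conclude that $\A$ is a Drinfeld center whose maximal Tannakian subcategories have FP-dimension $pq$, and transports such a subcategory $\E_1\subseteq\E'_\A\cong\C_{ad}$ back into $\C$; only for this enlarged $Rep(N)$ of dimension $pq$ is the de-equivariantization $\C_N$ pointed (square-free component dimensions plus nilpotency, \cite[Corollary 5.3]{GN}). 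Neither mechanism appears in your sketch. A secondary mismatch: your induction is set up to conclude that the smaller non-degenerate pieces are \emph{group-theoretical}, but the lifting criterion requires them --- and the whole crossed category --- to be \emph{pointed}; group-theoreticity of $\C_G^0$ is too weak to propagate upward. So the proposal identifies the correct framework but leaves the theorem's essential difficulty unresolved.
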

\begin{proof}
Let $\C$ be a non-degenerate fusion category of FP-dimensions $p^2q^3$. As $ \C_{pt}$ is non-trivial by \cite[Proposition 4.5]{ENO3}, \cite[Theorem 3.14]{DrGNO2} and \cite[Corollary 6.8]{GN} together imply that  $FPdim(\C_{ad})\neq p^2q^3$. Assume that $\C$ is not pointed.  In particular, $\C$ is not nilpotent, since if $\C$ is nilpotent, then \cite[Theorem 6.10]{DrGNO1} shows that $\C$ is braided equivalent to Deligne tensor product of non-degenerate fusion categories of FP-dimensions $p^2$ and $q^3$, which are pointed fusion categories, contradiction. Then $$FPdim(\C_{ad})\notin{\{p,q,p^2,q^2,pq,q^3,p^2q^2}\}.$$
In fact, if $FPdim(\C_{ad})$ is a prime power, then  $\C$ is nilpotent \cite[Theorem 8.28]{ENO1}.  If $FPdim(\C_{ad})= pq$, then  Lemma \ref{lemma} and \cite[Corollary 6.8]{GN} mean  that $\C_{ad}\subseteq\C_{pt}$, hence $\C$ is nilpotent. If $FPdim(\C_{ad})=p^2q^2$, then $\C_{pt}\subseteq\C_{ad}$ is a  Tannakian category  of FP-dimension $q$, which implies that $\C_{ad}\cong\D^{\Z_q}$, where $\D$ is pointed non-degenerate fusion category of FP-dimension $p^2q$ by \cite[Corollary 4.31]{DrGNO2}. Then $\C_{ad}$ contains a braided fusion subcategory $\C(\Z_q,\eta)^{\Z_q}$ of FP-dimension $q^2$, which must be pointed, this is impossible.

If $FPdim(\C_{ad})= pq^3$, then $\C_{pt}\subseteq\C_{ad}$ is a  Tannakian category  of FP-dimension $p$,
and $\C_{ad}\cong\D^{\Z_p}$, where $\D$ is pointed non-degenerate fusion category of FP-dimension $q^3$ by \cite[Corollary 4.31]{DrGNO2}.
Let   $\A:=\C_{\Z_p}=\oplus_{g\in\Z_p}\A_g$ with $\A_e=\D$. If $\A$ is not pointed, then simple objects of $\A$ have FP-dimension $1$ or $q$  \cite[Corollary 5.3]{GN}, again \cite[Lemma 10.7]{Kir}  shows that $rank(\A_g)=|\Q(\D)^g|=q$ . Let $X\in\Q(\D)^g$ be a non-trivial simple object, and $\B$ be the fusion category generated by $X$. Since $\Z_p$ is cyclic group of prime order,  there is a well-defined group homomorphism from $\Z_p$ to $Aut^\text{br}_\otimes(\B)$,   \cite[Theorem 9.5]{ENO2} says that we have a braided fusion category $\B^{\Z_p}$, which is a pointed fusion category of FP-dimension $pq$ by \cite[Proposition 4.26]{DrGNO2}, this is impossible.  Hence,   $\A$ is a pointed fusion category, and it follows from \cite[Theorem 7.3]{NNW}  that $\C$ is group-theoretical.

If $FPdim(\C_{ad})= pq^2$ and $FPdim(\C_{pt})=pq$, then $(\C_{ad})_{pt}=Rep(G)$ is a  Tannakian category  of FP-dimension $p$ or $q$ or $pq$. If $FPdim((\C_{ad})_{pt})=pq$, as $\C_G$ is faithfully graded by  $G$ and $\C_G^0$ is pointed,  then  \cite[Corollary 5.3]{GN} shows that $\C_G$ must be pointed, hence $\C$ is group-theoretical by \cite[Theorem 7.3]{NNW}. If $FPdim((\C_{ad})_{pt})=q$. Then $$\C_{ad}\cong (\C(\Z_p,\eta_1)\boxtimes\C(\Z_q,\eta_1))^{\Z_q}\cong \C(\Z_p,\eta_1)^{\Z_q}\vee\C(\Z_q,\eta_1)^{\Z_q},$$
which then implies that $\C_{ad}$ is pointed, as $\C_{ad}$ is generated by two pointed fusion subcategories, this is  impossible. If $FPdim((\C_{ad})_{pt})=p$, then again we have equivalences
 $$\C_{pt}\cong \C(\Z_q,\eta)^{\Z_p}\cong\C(\Z_q,\eta)\boxtimes Rep(\Z_p),$$
hence $\C$ contains a non-degenerate fusion category $\C(\Z_q,\eta)$. Then  $\C\cong\C(\Z_q,\eta)\boxtimes\C(\Z_q,\eta)'$ by \cite[Theorem 3.13]{DrGNO2}. By Theorem \ref{maintheorem1} or \cite[Theorem 4.6]{BGHKNNPR}, we obtain that $\C$ is a group-theoretical fusion category.

If $FPdim(\C_{ad})= p^2q$, then $(\C_{ad})_{pt}$ is a  Tannakian category  of FP-dimension $q$.
Let $\E=(\C_{ad})_{pt}=Rep(G)$ with $G=\Z_q$, assume that $\D=\oplus_{g\in G}\D_g:=\C_G$ and $\D_e=\C_G^0\cong\C(H,\eta_1)\boxtimes\C(\Z_q,\eta_2)$. Since $\D_e$ is pointed, $\D$ is nilpotent, for any $X\in\Q(\D)$,  $FPdim(X)\in{\{1,p}\}$ by \cite[Corollary 5.3]{GN}. Particularly, for any $g$, $rank(\D_g)=q$ or  $p^2q$.

If  $rank(\D_g)= p^2q$ for some $g\neq e$,  obviously $\D$ must be pointed, as $rank(\D_g)=|\Q(\D_e)^g|$ by \cite[Lemma 10.7]{Kir} and $G$ is a cyclic group of prime order. Then $\C$ is group-theoretical by \cite[Theorem 7.3]{NNW}.

If $rank(\D_g)=q$ for all non-trivial element $g\in G$,  again by \cite[Lemma 10.7]{Kir}  we have that $rank(\D_g)=|\Q(\D_e)^g|$. Since $ \D_e$ is pointed and $G$ acts as braided equivalence on $\D_e$,  set $\Q(\D_e)^g$ is an abelian group of order $q$. Therefore $G$ acts non-trivially on $\C(H,\eta_1)$ with no fixed pointed.  By \cite[Theorem 9.5]{ENO2} and \cite[Theorem 4.44]{DrGNO2}, we have a non-degenerate fusion category $\A$ of FP-dimension $p^2q^2$, which contains $\C(H,\eta_1)^G$ as a subcategory. Note that $\C(H,\eta_1)^G\cong\C_{ad}$, which is not pointed, then Theorem \ref{maintheorem1} implies that $\A$ is braided equivalent Drinfeld center of a pointed fusion category.  Hence maximal Tannakian subcategories of $\A$ have FP-dimension $pq$ by Theorem \ref{maintheorem1}. Since $\E=(\C_{ad})_{pt}\subseteq\C(H,\eta_1)^G\subseteq\A$ is Tannakian, $\E$ is contained in a maximal Tannakian subcategory $\E_1$ of $\A$. Then $\E_1\subseteq\E'_\A\cong\C(H,\eta_1)^G\cong \C_{ad}$, where $\E'_\A$ is the  centralizer of $\E$ in $\A$. Therefore, $\C$ contains a Tannakian subcategory $Rep(N)$ of FP-dimension $pq$, similarly $\C_N$ is a pointed fusion category, therefore $\C$ is also a group-theoretical fusion category by \cite[Theorem 7.3]{NNW}.
\end{proof}

\begin{theo}\label{maintheorem3}Let  p and q be odd primes, then non-degenerate fusion categories of FP-dimension $p^3q^3$ are group-theoretical.
\end{theo}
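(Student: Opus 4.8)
The plan is to follow the strategy of Theorem \ref{maintheorem2}, reducing to the already settled dimensions $p^2q^2$ and $p^2q^3$ whenever a non-degenerate pointed piece can be split off, and otherwise producing a Tannakian subcategory $Rep(N)$ with $\C_N$ pointed. Let $\C$ be non-degenerate of $FPdim$ $p^3q^3$ and assume it is not pointed. First I would record the standard reductions. Since $\C$ is solvable \cite[Theorem 1.6]{ENO3}, $\C_{pt}\neq Vec$, so $FPdim(\C_{ad})\neq p^3q^3$; moreover $\C$ is not nilpotent, for otherwise \cite[Theorem 6.10]{DrGNO1} would split it as a Deligne product of non-degenerate categories of dimensions $p^3$ and $q^3$, both pointed as in the proof of Theorem \ref{maintheorem2}, forcing $\C$ pointed. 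Exactly as before this removes every prime-power value of $FPdim(\C_{ad})$ (by \cite[Theorem 8.28]{ENO1}) and the value $pq$ (by Lemma \ref{lemma} and \cite[Corollary 6.8]{GN}), leaving $FPdim(\C_{ad})\in\{p^2q,pq^2,p^2q^2,p^3q,pq^3,p^3q^2,p^2q^3\}$; the $p\leftrightarrow q$ symmetry of $p^3q^3$ cuts this to the representatives $FPdim(\C_{ad})\in\{p^2q,pq^3,p^2q^2,p^2q^3\}$, i.e. $FPdim(\C_{pt})\in\{pq^2,p^2,pq,p\}$. A structural observation then drives everything: since $\C$ is not pointed, the solvable category $\C_{ad}\neq Vec$ has non-trivial invertibles, so $R:=(\C_{ad})_{pt}=\C_{ad}\cap\C_{pt}$ is non-trivial; lying in $\C_{ad}=(\C_{pt})'$ and in $\C_{pt}$ it is symmetric of odd dimension, hence Tannakian, and it is precisely the M\"uger center of $\C_{ad}$. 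In particular $\C_{pt}$ is always degenerate.

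The first reduction engine is Deligne factorization. Whenever $\C$ contains a non-degenerate pointed subcategory $\C(A,\eta)$ with $|A|\in\{p,q,pq\}$, \cite[Theorem 3.13]{DrGNO2} gives $\C\cong\C(A,\eta)\boxtimes\C(A,\eta)'$ with $\C(A,\eta)'$ non-degenerate of dimension $p^2q^3$, $p^3q^2$ or $p^2q^2$, hence group-theoretical by Theorem \ref{maintheorem2} or Theorem \ref{maintheorem1}; a Deligne product with a pointed category is again group-theoretical. Such a subcategory is available whenever $\C_{pt}$, viewed as a pre-metric group, has non-degenerate part of order divisible by $p$ or $q$: a metric group is the orthogonal sum of its Sylow subgroups, and every non-degenerate quadratic form on $\Z_p$ contains a non-degenerate line, so the only obstruction is a cyclic non-degenerate $p$- or $q$-part of order $\geq p^2$. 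Concretely this disposes of all configurations in which $\C_{pt}$ has a non-degenerate pointed subcategory of order $p$ or $q$; for instance $FPdim(\C_{pt})=pq$ with $|R|=p$ or $q$, where $\C_{pt}\cong\C(\Z_q,\eta)\boxtimes Rep(\Z_p)$ or its mirror, reduces $\C$ to the dimension-$p^3q^2$ (resp. $p^2q^3$) case of Theorem \ref{maintheorem2}.

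The second engine treats the wholly Tannakian situations, where $\C_{pt}=R$ and nothing can be split off. Here I de-equivariantize by a Tannakian subcategory $Rep(N)$ (taken inside $R$, or enlarged as below) and prove that the $N$-crossed braided category $\C_N$ is pointed, so that $\C$ is group-theoretical by \cite[Theorem 7.3]{NNW}. Pointedness of $\C_N$ is verified as in Theorem \ref{maintheorem2}: once the trivial component $(\C_N)_e$ is pointed, Kirillov's formula $rank((\C_N)_g)=|\Q((\C_N)_e)^g|$ \cite[Lemma 10.7]{Kir} controls the ranks of the graded pieces, and a hypothetical simple object $X$ of dimension $q$ would generate a subcategory $\B$ whose equivariantization is pointed of dimension $pq$ by \cite[Proposition 4.26]{DrGNO2} and \cite[Theorem 9.5]{ENO2} — impossible, since that forces $FPdim(\B)=q$ while $\B$ already contains the unit and $X$, giving $FPdim(\B)\geq 1+q^2$. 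This settles the balanced case $FPdim(\C_{ad})=p^2q^2$ with $R=\C_{pt}=Rep(\Z_{pq})$: de-equivariantizing by $\Z_{pq}$ yields $\C_{\Z_{pq}}$ of dimension $p^2q^2$ whose trivial component $(\C_{ad})_{\Z_{pq}}$ is pointed of dimension $pq$, which the above forces to be pointed throughout.

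The hardest part is the remaining unbalanced, wholly Tannakian cases: $FPdim(\C_{pt})$ equal to $p$ or $q$, and $FPdim(\C_{pt})\in\{p^2,q^2,p^2q,pq^2\}$ with the non-degenerate part of $\C_{pt}$ cyclic of order $p^2$ or $q^2$. In these one cannot peel off a square-free non-degenerate pointed subcategory, and de-equivariantizing $\C$ (or $\C_{ad}$, whose M\"uger center is the Tannakian $R$) by a single $Rep(\Z_p)$ lands not in a dimension covered by Theorem \ref{maintheorem1} or Theorem \ref{maintheorem2} but in a non-degenerate category of dimension $pq^3$ or $p^3q$. My plan is therefore to establish, as an auxiliary step, that non-degenerate categories of dimensions $pq^3$ and $p^3q$ are group-theoretical — their only non-pointed configuration has $FPdim(\C_{ad})=pq^2$ (resp. $p^2q$) with prime Tannakian $\C_{pt}$, which de-equivariantizes to a pointed category, so the Engine-2 argument applies verbatim — and then to transport this back to $\C$, either by assembling the prime de-equivariantization together with the Tannakian subcategory found downstairs into a single $Rep(N)\subseteq\C$ with $\C_N$ pointed, or by the stability of the group-theoretical class under (de-)equivariantization. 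The delicate points — that the enlarged $N$ is genuinely Tannakian so that the de-equivariantization is defined, that the possibly non-abelian grading groups behave, and that no dimension-$q$ simple survives the final de-equivariantization — are where the real work lies, and they are precisely the difficulties already encountered in the last case of Theorem \ref{maintheorem2}.
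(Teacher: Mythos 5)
Where your proposal overlaps the paper, it is essentially the paper's own method: for $FPdim(\C_{ad})\in\{p^2q,pq^2\}$ the paper likewise splits off a non-degenerate pointed $\C(\Z_p,\eta)$ (resp.\ $\C(\Z_q,\eta)$) via \cite[Theorem 3.13]{DrGNO2} and quotes Theorem \ref{maintheorem2}, and for $FPdim(\C_{ad})=p^2q^2$ it likewise shows $\C_{pt}$ is Tannakian, de-equivariantizes, and gets pointedness of $\C_G$ from \cite[Corollary 5.3]{GN} together with \cite[Theorem 7.3]{NNW}. The genuine gap is exactly what you call ``the hardest part''. The paper's opening step is to rule those configurations out altogether: arguing as in Theorem \ref{maintheorem2} it shows $FPdim(\C_{ad})\notin\{pq^3,\,p^3q,\,p^2q^3,\,p^3q^2\}$, equivalently $FPdim(\C_{pt})\notin\{p,q,p^2,q^2\}$, so that only the three cases above survive. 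Your proposal keeps these cases alive and then ends with an unexecuted plan for them: first prove an auxiliary theorem for dimensions $pq^3$ and $p^3q$, then ``transport back'' by assembling a Tannakian $Rep(N)\subseteq\C$ or by stability of group-theoreticity under (de-)equivariantization. None of those steps is carried out --- you say yourself that this is ``where the real work lies'' --- and they are not routine: for instance, to dispose of $FPdim(\C_{pt})=p$ one must show that the $\Z_p$-action on the invertibles of the non-degenerate trivial component $\C^0_{\Z_p}$ (of dimension $pq^3$) cannot be fixed-point free on non-trivial objects, since $G(\C)\cong\Z_p$ forces exactly that freeness; no such fixed-point argument appears in your sketch, nor do proofs that your lifted $Rep(N)$ is Tannakian or that $\C_N$ is pointed. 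So the proposal establishes the theorem only in the cases coinciding with the paper's three, and defers the rest; as written it is not a proof.

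A secondary but real defect sits inside your ``Engine 2''. Your justification that $\C_N$ has no simple object of dimension $q$ is circular: you claim that \cite[Theorem 9.5]{ENO2} and \cite[Proposition 4.26]{DrGNO2} make the subcategory $\B$ generated by such a simple have a \emph{pointed} equivariantization of dimension $pq$, and then you contradict that conclusion. Those results only guarantee existence of the equivariantization and compute its dimension; they give no pointedness, and pointedness is precisely what fails for a category containing a $q$-dimensional simple. The arguments that actually work, and that the paper uses, are: (i) for a faithfully graded category whose trivial component is pointed, every simple $X$ satisfies $FPdim(X)^2\mid FPdim$ of the trivial component (\cite[Corollary 5.3]{GN}), which kills $q$-dimensional simples outright when that component has dimension $pq$; or (ii) in the unbalanced case of Theorem \ref{maintheorem2}, Kirillov's formula \cite[Lemma 10.7]{Kir} produces a $g$-fixed non-trivial \emph{invertible} object of the pointed trivial component, whose generated subcategory has pointed equivariantization sitting inside $\C$ and contradicting the known small value of $FPdim(\C_{pt})$. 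Your Engine-2 step should be replaced by one of these.
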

\begin{proof}
First, note  that we also have
$$FPdim(\C_{ad})\in{\{1,p,q,p^2,q^2,pq,p^2q,pq^2,p^3,q^3,p^2q^2,pq^3,p^3q,p^2q^3,p^3q^2,p^3q^3}\}.$$
Then, assume $\C$ is not pointed,  as in Theorem \ref{maintheorem2},  we have $$FPdim(\C_{ad})\notin{\{1,p,q,p^2,q^2,pq,p^3,q^3,pq^3,p^3q,p^2q^3,p^3q^2,p^3q^3}\}.$$

If $FPdim(\C_{ad})=p^2q^2$, then we claim that $\C_{pt}\subseteq\C_{ad}$ is Tannakian. Indeed, if $FPdim((\C_{ad})_{pt})=p$ or $q$, then $\C_{ad}$ contains a pointed fusion category of FP-dimension $p^2$ or $q^2$ as Theorem \ref{maintheorem1}. Hence, let $\C_{pt}=Rep(G)$,  since $\C_G$ has a faithful $G$-grading with trivial component be a pointed fusion category of FP-dimension $pq$ \cite[Proposition 4.56]{DrGNO2}, we obtain that $\C_G$ nilpotent, then $\C_G$ is pointed by \cite[Corollary 5.3]{GN}. Therefore, $\C$ is a group-theoretical fusion category by \cite[Theorem 7.3]{NNW}.

If $FPdim(\C_{ad})=p^2q$ and $FPdim(\C_{pt})=pq^2$. We consider  $FPdim((\C_{ad})_{pt})$. If  $FPdim((\C_{ad})_{pt})=pq$, then $\C$ is a group-theoretical fusion category as above. If $FPdim((\C_{ad})_{pt})=p$,   similarly we obtain that  $p^2|FPdim((\C_{ad})_{pt})$, this is impossible. Therefore, $FPdim((\C_{ad})_{pt})=q$. Then $(\C_{ad})_{pt}\cong Rep(\Z_q)$ is Tannakian, and it follows from \cite[Corollary 4.31]{DrGNO2} that
$$\C_{pt}\cong(\C(\Z_p,\eta_1)\boxtimes\C(\Z_q,\eta_2))^{\Z_q}
\cong\C(\Z_p,\eta_1)^{\Z_q}\vee\C(\Z_q,\eta_2)^{\Z_q}\supseteq\C(\Z_p,\eta_1)^{\Z_q}.$$
While $\C(\Z_q,\eta_2))^{\Z_q}\cong\C(\Z_p,\eta_1)\boxtimes Rep(\Z_q)$, therefore,
$\C\cong\C(\Z_p,\eta)\boxtimes\C(\Z_p,\eta)'$ by \cite[Theorem 3.13]{DrGNO2},  and $\C(\Z_p,\eta)'$ is a non-degenerate fusion category of FP-dimension $p^2q^3$, hence $\C$ is a group-theoretical fusion category  by Theorem \ref{maintheorem2}.

When $FPdim(\C_{ad})=pq^2$ and $FPdim(\C_{pt})=p^2q$, it suffices to change $p$ with $q$ in above proof, then the rest  is same, so $\C$  is   group-theoretical.
\end{proof}

\begin{coro}Non-degenerate fusion categories   of FP-dimension $p^2q^2d$,   $p^2q^3d$ and $p^3q^3d$ are group-theoretical, where  p and q  are odd primes,  $d$ is a square-free integer such that $(pq,d)=1$.
\end{coro}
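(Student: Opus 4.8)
The plan is to reduce the composite-dimension statement to the prime-power cases already established in the three main theorems via a factorization argument using the Tannakian/square-free structure. Let $\C$ be a non-degenerate fusion category of FP-dimension $N = m \cdot d$, where $m \in \{p^2q^2, p^2q^3, p^3q^3\}$ and $d$ is square-free with $(pq,d)=1$. First I would invoke the cited result \cite[Theorem 3.2.2]{OY} (flagged in the introduction) together with the prime decomposition of $d$: since $d = \ell_1 \ell_2 \cdots \ell_k$ is a product of distinct primes each coprime to $pq$, the factor of $\C$ carrying these dimensions should split off. Concretely, for each prime $\ell \mid d$, the $\ell$-part of the non-degenerate category is forced to be pointed because a non-degenerate fusion category of square-free FP-dimension is pointed by \cite[Theorem 2.11]{ENO3}, and pointed pieces of coprime dimension can be separated using the decomposition $\C \cong \C(\Z_\ell,\eta)\boxtimes\C(\Z_\ell,\eta)'$ from \cite[Theorem 3.13]{DrGNO2}.

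The key steps, in order, are as follows. First I would show that $\C$ contains a non-degenerate pointed fusion subcategory $\P$ of FP-dimension $d$: because $(pq,d)=1$ and $d$ is square-free, the invertible objects whose order divides $d$ generate such a subcategory, and its non-degeneracy follows from the coprimality forcing the relevant quadratic form to be non-degenerate. Second, by \cite[Theorem 3.13]{DrGNO2} the inclusion of a non-degenerate subcategory gives a braided factorization $\C \cong \P \boxtimes \P'$, where $\P'$ is the centralizer of $\P$ in $\C$. Then $\P'$ is a non-degenerate fusion category of FP-dimension $m \in \{p^2q^2, p^2q^3, p^3q^3\}$. Third, Theorem \ref{maintheorem1}, Theorem \ref{maintheorem2}, and Theorem \ref{maintheorem3} respectively show that $\P'$ is group-theoretical, and $\P$, being pointed, is trivially group-theoretical. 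Finally, since the Deligne tensor product of group-theoretical fusion categories is group-theoretical (the Drinfeld center of a tensor product factors as the tensor product of centers, so a Lagrangian subcategory of $\Y(\C) \cong \Y(\P)\boxtimes\Y(\P')$ is obtained as the Deligne product of Lagrangian subcategories), we conclude that $\C$ is group-theoretical.

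I expect the main obstacle to be establishing that a non-degenerate pointed subcategory of the exact FP-dimension $d$ genuinely splits off as a Deligne factor, rather than merely producing a pointed subcategory of dimension $d$ that might be degenerate inside $\C$. The delicate point is verifying non-degeneracy of the quadratic form restricted to the order-$d$ invertibles: one must rule out the possibility that some of these invertibles lie in the Müger center of the pointed part or centralize the whole of $\C$. Here the square-freeness of $d$ and the coprimality $(pq,d)=1$ are essential, since they prevent the quadratic form from having a nontrivial radical of $\ell$-power order for $\ell \mid d$; combined with the fact that $\C$ itself is non-degenerate, this forces the restricted form to be non-degenerate. Once this splitting is secured, the remainder is a direct appeal to the already-proven theorems, so the argument is short and the real content is entirely contained in the prime-power cases.
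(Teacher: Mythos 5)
Your skeleton is the same as the paper's: factor $\C\cong\D\boxtimes\C(\Z_d,\eta)$ with $\D$ non-degenerate of FP-dimension $p^2q^2$, $p^2q^3$ or $p^3q^3$, apply Theorems \ref{maintheorem1}, \ref{maintheorem2}, \ref{maintheorem3} to $\D$, and use closure of group-theoreticity under Deligne products (your justification of that closure, via $\Y(\A\boxtimes\B)\cong\Y(\A)\boxtimes\Y(\B)$ and products of Lagrangian subcategories, is correct). But the paper does not prove the factorization at all: it is imported verbatim from \cite[Section 3.2]{OY}, and that citation \emph{is} the entire proof.

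The bulk of your proposal is an attempted self-contained proof of that factorization, and it has a genuine gap at its first and decisive step: existence. You assert that the invertible objects of order dividing $d$ form a group of order $d$, i.e.\ that $d$ divides $|G(\C)|$; no cited result gives this, and it is not a formal consequence of square-freeness and coprimality. The tools in this paper that manufacture invertible objects --- solvability \cite[Theorem 1.6]{ENO3} and \cite[Proposition 4.5]{ENO3} --- apply only to FP-dimension $p^aq^b$ with two prime factors; for FP-dimension $p^aq^bd$ nothing quoted even rules out that $G(\C)$ contains no element of order $\ell$ for some prime $\ell\mid d$. Your ``$\ell$-part'' language is circular for the same reason: a non-nilpotent fusion category has no intrinsic $\ell$-part, so there is no subcategory to which \cite[Theorem 2.11]{ENO3} (square-free $\Rightarrow$ pointed) can be applied before the factor you want has been constructed; this existence claim is exactly the content of \cite[Theorem 3.2.2]{OY}. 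Your second step (non-degeneracy of the restricted form) is likewise asserted rather than proved: non-degeneracy of $\C$ does not pass to subgroups of invertibles (for abelian $G$, $Rep(G)\subseteq\Y(Vec_G)$ is a totally degenerate pointed subcategory of a non-degenerate category). A real argument does exist here, at least for odd $\ell\mid d$: a nontrivial radical element $g$ of order $\ell$ satisfies $b(g,g)=\eta(g)^2=1$ and $\eta(g)^{\ell^2}=1$, hence $\eta(g)=1$, so $\langle g\rangle\cong Rep(\Z_\ell)$ is Tannakian, and applying \cite[Proposition 4.56]{DrGNO2} inside the centralizer of $g$ would produce a non-degenerate fusion category of FP-dimension $FPdim(\C)/\ell^2$, a rational non-integer, which is impossible; when $d$ is even the order-$2$ case (a possible fermion) needs separate treatment. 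Since neither of the two points you yourself flag as ``the real content'' is established, the proposal as written does not prove the corollary; it becomes correct only if \cite[Theorem 3.2.2]{OY} is invoked as a black box, in which case it collapses to the paper's own two-line proof.
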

\begin{proof}Let $\C$ be such a non-degenerate fusion category. It was proved in \cite[section 3.2]{OY} that we have an equivalence of braided fusion categories $\C\cong\D\boxtimes\C(\Z_d,\eta)$, where $\D$ is a non-degenerate fusion category  of FP-dimension $p^2q^2$, $p^2q^3$ or $p^3q^3$. Then $\C$ is group-theoretical by Theorem \ref{maintheorem1},  Theorem \ref{maintheorem2} and Theorem \ref{maintheorem3}, respectively.
\end{proof}

\begin{rema}If odd primes $p,q$ satisfy $p|(q+1)$, then it follows from \cite[Theorem 1.1]{JL} that there exists fusion category $\C$ of FP-dimension $pq^2$, which is not group-theoretical. Then Drinfeld center $\Y(\C)$ is  not group-theoretical and  $FPdim(\Y(\C))=p^2q^4$.
\end{rema}

\section*{Acknowledgements}
The author is   grateful to Professor V. Ostrik   weekly insightful conversations.  This paper was written during a visit of the author at University of Oregon supported by China Scholarship Council (grant. No 201806140143), he  appreciates the Department of Mathematics for their warm hospitality.

\bigskip
\author{Zhiqiang Yu\\ \thanks{Email:\,zhiqyu-math@hotmail.com}
\\{\small College  of Mathematical Science,  East China Normal University,
Shanghai 200241, China}
}

\end{document}